\newcommand{\set}[1]{\left\{#1\right\}}
\newcommand{\abs}[1]{\lvert#1\rvert}
\newcommand{\R}{\mathbb{R}}
\newcommand{\N}{\mathbb{N}}
\newtheorem{theorem}{Theorem}
\newtheorem{lemma}{Lemma}
\newtheorem{corollary}{Corollary}
\DeclareMathOperator\diam{diam}
\begin{document}

\title[Distances Between Points in Level Sets of Continuous Functions]{Characterizing Distances Between Points in the Level Sets of a Class of Continuous Functions on a Closed Interval}

\author[Y. Luo]{Yuanming Luo}
\author[H. Riely]{Henry Riely}

\address[Y. Luo]{School of Mathematics\\
        Georgia Institute of Technology\\
        Atlanta, GA 30332}
        
\address[H. Riely]{Department of Mathematics\\
        Kennesaw State University\\
        Marietta, GA 30060}

\begin{abstract}
Given a continuous function $f:[a,b]\to\R$ such that $f(a)=f(b)$, we investigate the set of distances $|x-y|$ where $f(x)=f(y)$. In particular, we show that the only distances this set must contain are ones which evenly divide $[a,b]$. Additionally, we show that it must contain at least one third of the interval $[0,b-a]$. Lastly, we explore some higher dimensional generalizations.
\end{abstract}

\maketitle

\section{Introduction}

Imagine waking up on a crisp fall morning and deciding to use the day for a hike. You drive to the trailhead and begin to chart a route. Since you must return to your car, your elevation will be the same at the beginning and the end of your walk. Are there other elevations through which you will pass twice? Clearly there are. If you begin on an ascent, you must descend to return to the trailhead, and if you begin with a decent, you must eventually ascend. If the trail is perfectly flat, then at every moment your elevation is shared by every other moment. This intuition is often given in an introductory calculus course to illustrate the intermediate value theorem.

A natural follow up question: Can anything be said about the time elapsed between two points of equal elevation? For instance, if your hike lasts an hour, we know that there are two instants, separated by an hour, of equal elevation, namely the start and the finish. Need there be two such instances separated by a half hour? The answer, it turns out, is yes. Separated by 25 minutes? No, it's possible to design a hike with no 25 minute time interval leaving you at the same elevation that you started. So what is special about 30 minutes? Can we characterize all such durations? In this paper, we answer this and related questions.\footnote{The contents of this paper are motivated by Exercise 5.4.6. in \cite{abbott}.}

\section{Notation}

Given a closed interval $[a,b] \subset \R$ and a real number $\lambda$, we will use $C_\lambda([a,b])$ to represent the set of continuous functions on $[a,b]$ mapping both endpoints to $\lambda$. More precisely,
$$C_\lambda([a,b]) = \set{f:[a,b]\to \R \;|\; f \text{ is continuous and } f(a)=f(b)=\lambda}$$

\noindent
We will use $C_\R([a,b])$ to refer to functions in any $C_\lambda([a,b])$:
$$C_\R([a,b]) = \bigcup_{\lambda\in\R} C_\lambda([a,b]) = \set{f:[a,b]\to \R \;|\; f \text{ is continuous and } f(a)=f(b)}$$

\noindent
Given a function $f\in C_\R([a,b])$, and a subset $X\subseteq [a,b]$, let

$$D_f(X) = \set{d>0: \abs{x-y}=d \text{ and } f(x)=f(y) \text{ for some } x, y \in X }$$

\noindent
If $X$ is absent as in $D_f$, assume $X=[a,b]$.

$A^\mathrm{o}$, $\overline{A}$, and $\partial A$ will be used to denote the topological interior, closure, and boundary of $A$ respectively, $\mu(A)$ will be used for the Lebesgue measure of $A$.

\section{Main Results}

\begin{theorem} \label{thm1}

Let $f$ be a real-valued, continuous function on the closed interval $[a,b]$ such that $f(a)=f(b)$. Given any $n \in \N$, there exist $x$ and $y$ in $[a,b]$ such that $|x-y| = \frac{b-a}{n}$ and $f(x) = f(y)$.
\end{theorem}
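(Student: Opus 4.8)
The plan is to convert the problem into finding a zero of a suitable auxiliary function and then to force that zero by a telescoping-sum argument. Write $h = \frac{b-a}{n}$ and define $g\colon [a,b-h]\to\R$ by $g(x) = f(x+h)-f(x)$; this domain is nonempty because $h\le b-a$. A zero of $g$ is precisely a pair $x$, $y=x+h$ with $f(x)=f(y)$ and $\abs{x-y}=h$, so the whole theorem reduces to showing that $g$ vanishes somewhere on $[a,b-h]$.

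To do this I would argue by contradiction. Since $f$ is continuous, so is $g$; if $g$ were nowhere zero, then by the intermediate value theorem it would have constant sign on the interval $[a,b-h]$, say $g>0$ everywhere (the case $g<0$ is handled by replacing $f$ with $-f$). Now sample $g$ at the $n$ equally spaced points $x_k = a+kh$, $k=0,1,\dots,n-1$, which all lie in $[a,b-h]$ since $x_{n-1}=b-h$. The sum telescopes:
$$\sum_{k=0}^{n-1} g(x_k) = \sum_{k=0}^{n-1}\bigl(f(x_{k+1})-f(x_k)\bigr) = f(b)-f(a) = 0,$$
using $x_0=a$ and $x_n=b$. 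But each term is strictly positive, so the sum is strictly positive — a contradiction. Hence $g$ has a zero, which finishes the argument.

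I do not expect a serious obstacle here; the one idea that has to be gotten right is to evaluate $g$ at exactly the $n$ partition points of $[a,b]$ into subintervals of length $h$, since it is this telescoping that makes the constant-sign assumption untenable. This is also precisely where the hypothesis that the distance is $\frac{b-a}{n}$, as opposed to an arbitrary $d$, is used; the remainder is just continuity and the intermediate value theorem.
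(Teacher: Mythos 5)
Your proof is correct and uses essentially the same idea as the paper: the telescoping sum of $g(x)=f(x+\frac{b-a}{n})-f(x)$ over the $n$ partition points together with the intermediate value theorem. You merely package it as a proof by contradiction (no zero forces constant sign, contradicting the zero sum), while the paper argues directly that a sign change or an exact zero among the sampled values must occur; the substance is identical.
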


\begin{proof}
We may assume without loss of generality that $[a,b] = [0,1]$. If not, just apply the result to $f(a + (b-a)x)$.

Define $g(x) = f(x + \frac1n)  - f(x)$ and consider the sum
\begin{align}
& g(0)  + g\left(\frac1n\right) + g\left(\frac2n\right) + \dots + g\left(\frac{n-1}{n}\right) \label{thm1line1}\\
& = f\left(\frac1n\right) - f(0) + f\left(\frac2n\right) - f\left(\frac1n\right) + \dots + f(1) - f\left(\frac{n-1}{n}\right)\label{thm1line2} \\
& = f(1) - f(0) = 0 \label{thm1line3}
\end{align}
where (\ref{thm1line3}) follows from (\ref{thm1line2}) due to cancellation.\\

If every term in (\ref{thm1line1}) is 0, then the result follows immediately because $f(\frac{k+1}{n})  = f(\frac{k}{n})$ for $k = 0, 1, \dots, n-1$. If (\ref{thm1line1}) contains one or more nonzero terms, then there must be at least one positive and one negative term in order for the sum to be zero. That is, $g\left(\frac{k_1}{n}\right)<0$ and $g\left(\frac{k_2}{n}\right) > 0$ for some integers $k_1$ and $k_2$ between 0 and $n-1$. Thus, by the intermediate value theorem, $g(c) = 0$ for some $c$ between $\frac{k_1}{n}$ and $\frac{k_2}{n}$ (the continuity of $g$ follows from the continuity of $f$). Therefore, we have $f(c + \frac1n)  - f(c) = 0$.
\end{proof}

Thereom \ref{thm1} provides a partial answer to the question posed in the introduction. If we hike for an hour, there will be two instants, 30 minutes apart, of equal elevation because 30 minutes is half of an hour. The same is true for 20 minutes, 15 minutes, etc. We are not done, however, because we haven't ruled out other durations.

\begin{theorem} \label{thm2}
Given a closed interval $[a,b]$, let $0<d<b-a$. If $d$ is not of the form $\frac{b-a}{n}$, then there exists a continuous function $f:[a,b] \to \R$ with $f(a) = f(b)$ such that $d\notin D_f$.\footnote{The definition of $D_f$ is given in Section 2: Notation.}
\end{theorem}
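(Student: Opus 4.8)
The plan is to produce an explicit counterexample. Exactly as in the proof of Theorem \ref{thm1}, I would first reduce to the case $[a,b]=[0,1]$ via the affine substitution $x\mapsto a+(b-a)x$; this turns the forbidden distances $\frac{b-a}{n}$ into $\frac1n$, so it suffices to build, for each $d\in(0,1)$ with $\frac1d\notin\N$, a function $f\in C_0([0,1])$ with $d\notin D_f$.

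The crucial observation is that the hypothesis $\frac1d\notin\N$ is equivalent to $\sin(\pi/d)\neq 0$. This suggests building $f$ from a $d$-periodic ``wave'' plus a linear correction. Concretely, set $g(x)=\sin^2\!\left(\frac{\pi x}{d}\right)$, so that $g(x+d)=g(x)$ for all $x$, put $c=\sin^2\!\left(\frac{\pi}{d}\right)$ (which is nonzero by the observation), and define
$$f(x)=g(x)-cx.$$
Then $f$ is continuous on $[0,1]$, $f(0)=g(0)-0=0$, and $f(1)=\sin^2\!\left(\frac{\pi}{d}\right)-c=0$, so $f\in C_0([0,1])$.

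It then remains to check that $d\notin D_f$. If $x,y\in[0,1]$ satisfy $\abs{x-y}=d$, then, relabeling if necessary, $y=x+d$, and
$$f(x+d)-f(x)=\bigl(g(x+d)-g(x)\bigr)-c\bigl((x+d)-x\bigr)=-cd\neq 0$$
since $c\neq 0$ and $d>0$. Hence no two points of $[0,1]$ at distance $d$ share a value of $f$, i.e.\ $d\notin D_f$; transporting back to $[a,b]$ finishes the proof.

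I do not expect a genuine obstacle here: once the ``periodic minus linear'' template is in hand, every step is a one-line verification. The only real idea is recognizing that a purely period-$d$ function $g$ already satisfies $g(x+d)=g(x)$ but generally fails the boundary condition $g(1)=g(0)$, and that subtracting the chord through $(0,g(0))$ and $(1,g(1))$ simultaneously repairs the endpoints and merely shifts $f(x+d)-f(x)$ by the constant $-cd$ — nonzero precisely because $\frac1d\notin\N$ forces $g(1)\neq g(0)$, i.e.\ $c\neq 0$.
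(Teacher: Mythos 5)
Your proposal is correct and follows essentially the same route as the paper: a $d$-periodic function with unequal values at $0$ and $1$ (possible precisely because $\tfrac1d\notin\N$) plus a strictly monotone correction that repairs the endpoint condition while shifting $f(x+d)-f(x)$ by a nonzero constant. Your $g(x)=\sin^2(\pi x/d)$ and linear term $-cx$ are just an explicit instance of the paper's general choice of periodic $p$ and monotone $m$.
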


\begin{proof}
Once again, we can assume without loss of generality that $[a,b] = [0,1]$. First, let $p(x)$ be any continuous $d$-periodic function with $p(0)\neq p(1)$. Note that the existence of such functions hinges on the fact that $d\neq\frac1n$. Next, let $m(x)$ be any strictly monotone continuous function such that $m(0)=0$ and $m(1)=p(0)-p(1)$. We can insist on strict monotonicity since $m(0)=0\neq p(0)-p(1)=m(1)$. Then $p+m$ is continuous as the sum of continuous functions. Furthermore, $(p+m)(0) = p(0) = p(1) + p(0) - p(1) = (p+m)(1)$.

To finish, we must show that $d \notin D_{p+m}$. Indeed, for all $x\in [0,1-d]$, we have
\begin{align*}
(p+m)(x+d) - (p+m)(x) &= p(x+d) - p(x) + m(x+d) -m(x)\\
&= 0 + m(x+d) - m(x) \neq 0
\end{align*}
using the monotonicity of $m$ and the periodicity of $p$.
\end{proof}

Taken together, Theorem \ref{thm1} and Theorem \ref{thm2} tell us that, on a hike that begins and ends at the same point, the only durations we know, a priori, will separate times of equal elevation, must evenly divide that total time of the hike. This is expressed formally in the following corollary:\footnote{The definition of $C_\R([a,b])$ is given in Section 2: Notation.}

\begin{corollary} \label{cor1}
\[
\bigcap_{f\in C_\R([a,b])} D_f = \set{\frac{b-a}{n}: n\in\N}.
\]
\end{corollary}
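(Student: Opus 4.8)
The plan is to prove the set equality by showing the two inclusions separately, with both directions following almost immediately from the two theorems already established. Write $S = \bigcap_{f\in C_\R([a,b])} D_f$ and $T = \set{\frac{b-a}{n} : n\in\N}$.

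For the inclusion $T \subseteq S$: fix $n\in\N$ and let $d = \frac{b-a}{n}$. By Theorem \ref{thm1}, for \emph{every} $f \in C_\R([a,b])$ there exist $x,y\in[a,b]$ with $|x-y| = d$ and $f(x)=f(y)$, so $d\in D_f$ for every such $f$. Hence $d$ lies in the intersection, i.e. $d\in S$. One small point worth a sentence: when $n=1$ the distance $b-a$ is witnessed by the endpoints $a$ and $b$ themselves (this is also the $n=1$ case of Theorem \ref{thm1}), so the $n=1$ term is not an exception.

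For the reverse inclusion $S \subseteq T$: I would argue by contraposition. Suppose $d \notin T$. If $d \le 0$ or $d \ge b-a$, then $d$ fails the defining condition $d>0$ together with $|x-y|\le b-a$ for $x,y\in[a,b]$, so $d\notin D_f$ for, say, a strictly monotone $f$ shifted to have equal endpoints — or more simply, $d\notin D_f$ where $f$ is any injective-on-$[a,b)$ function; in any case $d\notin S$. (Actually the cleanest phrasing: every $d\in D_f$ satisfies $0<d\le b-a$, so any $d$ outside $(0,b-a]$ is automatically outside $S$; and $d=b-a$ is in $T$ as the $n=1$ term, so we need not worry about it.) If instead $0 < d < b-a$ and $d$ is not of the form $\frac{b-a}{n}$, then Theorem \ref{thm2} produces a specific $f\in C_\R([a,b])$ with $d\notin D_f$, and therefore $d\notin S$. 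Combining the cases, $d\notin T \implies d\notin S$, which is the desired inclusion.

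There is no real obstacle here — the corollary is a formal repackaging of Theorems \ref{thm1} and \ref{thm2}. The only thing requiring a modicum of care is the boundary bookkeeping: making sure the definition $D_f = \set{d>0 : \dots}$ excludes $d=0$ and $d > b-a$ cleanly, and noting that $d = b-a$ is covered by the $n=1$ case so that Theorem \ref{thm2}'s hypothesis $0<d<b-a$ does not leave a gap. I would write the proof in four or five lines, citing Theorem \ref{thm1} for $\supseteq$ and Theorem \ref{thm2} (plus the trivial range bound on $D_f$) for $\subseteq$.
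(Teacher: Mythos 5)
Your proposal is correct and follows the same route as the paper, which simply cites Theorem \ref{thm1} for one inclusion and Theorem \ref{thm2} for the other; your extra bookkeeping about $d$ outside $(0,b-a]$ and the $n=1$ case $d=b-a$ is sound and just makes explicit what the paper leaves implicit.
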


\begin{proof}
Theorem \ref{thm1} gives one inclusion and Theorem \ref{thm2} gives the other.
\end{proof}

Corollary \ref{cor1} characterizes the distances which are common to all functions in $C_\R([a,b])$. One then might wonder whether this represents a small intersection of large overlapping sets or there is a particular $f\in C_\R([a,b])$ such that $D_f = \set{\frac{b-a}{n}: n\in\N}$. It turns out to be the former. Each $D_f$ is considerably larger than the set of divisors of $b-a$. In fact, each $D_f$ contains at least a third of the numbers between 0 and $b-a$. Before we prove it, we need to develop a series of lemmas about $D_f$.

\begin{lemma} \label{inclusion}
If $A \subseteq B$, then $D_f(A) \subseteq D_f(B)$.
\end{lemma}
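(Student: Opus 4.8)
The plan is to argue directly from the definition of $D_f(X)$, which is a set carved out by an existential quantifier ranging over points of $X$. So I would take an arbitrary element $d \in D_f(A)$ and unwind what membership means: there exist $x, y \in A$ with $\abs{x-y} = d$ and $f(x) = f(y)$. Since $A \subseteq B$, those very same points $x$ and $y$ also lie in $B$, so they serve as witnesses showing $d \in D_f(B)$. Because $d$ was arbitrary, this gives $D_f(A) \subseteq D_f(B)$, which is exactly the claim.

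The only point worth pausing on is the monotonicity principle underneath: enlarging the ground set $X$ can only add admissible pairs $(x,y)$, never delete them, so the condition defining $D_f(X)$ gets easier to satisfy as $X$ grows. Nothing about the continuity of $f$ or the structure of $[a,b]$ is used. I expect no genuine obstacle here — the proof is a one-line unwinding of quantifiers — and the lemma is recorded separately only because it will be applied many times when we compare $D_f$ on a subinterval of $[a,b]$ with $D_f$ on the whole interval, and we want a clean reference for that step.
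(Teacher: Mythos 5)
Your proof is correct and is essentially identical to the paper's: take $d \in D_f(A)$, note the witnessing points $x, y \in A$ also lie in $B$, and conclude $d \in D_f(B)$. Nothing further is needed.
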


\begin{proof}
Assume $d\in D_f(A)$. Then there are points $x,y\in A$ such that $|x-y|=d$ and $f(x)=f(y)$. But $A \subseteq B$, so $x$ and $y$ are also in $B$. Thus, $d\in D_f(B)$.
\end{proof}

\begin{lemma} \label{constant}
Let $f$ be a constant function on a bounded set $A\subset\R$. Assume $A$ has a maximum value $m$. Then $\mu(D_f(A)) \geq \mu(A)$.
\end{lemma}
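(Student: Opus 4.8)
The plan is to start from the observation that constancy of $f$ on $A$ makes the constraint $f(x)=f(y)$ vacuous, so that
\[
D_f(A) = \set{x-y : x,y\in A,\ x>y},
\]
the set of positive differences of elements of $A$. The whole point of the hypothesis that $A$ attains its maximum is that it lets us anchor every difference at a single point: writing $m=\max A$, for each $a\in A$ the number $m-a$ is nonnegative, and it is realized as a difference by the admissible pair $x=m$, $y=a$ (both in $A$, with $f(x)=f(y)$ automatic). Hence I would record the inclusion
\[
D_f(A)\ \supseteq\ (m-A)\setminus\set{0},\qquad\text{where } m-A:=\set{m-a : a\in A}.
\]

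To conclude, I would run an elementary measure estimate. The reflection $t\mapsto m-t$ is an isometry of $\R$, so $\mu(m-A)=\mu(A)$; deleting the single point $0$ does not change the measure, so $\mu\bigl((m-A)\setminus\set{0}\bigr)=\mu(A)$; and monotonicity of $\mu$ applied to the inclusion above yields $\mu(D_f(A))\ge\mu(A)$, as desired.

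There is essentially no obstacle here beyond spotting the right trick, namely to measure all gaps from the extreme point $m$ rather than trying to control the full difference set $A-A$ directly (which would overshoot — one only needs half of it). The one technical caveat I would flag is that for an arbitrary bounded set $A$ neither $D_f(A)$ nor $m-A$ need be Lebesgue measurable, so $\mu$ should be understood as Lebesgue outer measure throughout; monotonicity, isometry-invariance, and insensitivity to a single point all still hold for outer measure, so the argument goes through verbatim. (In every application in this paper $A$ will be a finite union of intervals, so this point is cosmetic.)
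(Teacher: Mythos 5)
Your proof is correct and follows essentially the same route as the paper: both reduce to the inclusion $D_f(A)\supseteq m-A$ (you, slightly more carefully, delete the point $0$) and then use reflection-invariance and monotonicity of the measure. Your remarks about excluding $0$ and using outer measure for a possibly nonmeasurable $A$ are sensible refinements but do not change the argument.
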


\begin{proof}
Notice that $D_f(A)$ contains the set $m-A = \set{m-a \;|\; a\in A}$. Therefore $\mu(D_f(A)) \geq \mu(m-A) = \mu(A)$.
\end{proof}

\begin{lemma} \label{1 interval}
Let $f\in C_\lambda([a,b])$ and suppose either $f(x) > \lambda$ for all $a<x<b$ or $f(x) < \lambda$ for all $a<x<b$. Then $D_f([a,b]) = (0,b-a]$.
\end{lemma}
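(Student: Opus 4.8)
The plan is to prove both inclusions. The inclusion $D_f([a,b]) \subseteq (0, b-a]$ is immediate: any two points of $[a,b]$ are at distance at most $b-a$, and the distance is positive since we need $x \neq y$ (as $f$ is not constant — it attains a value strictly on one side of $\lambda$ in the interior — there is no $d$ forcing $x=y$ anyway, but more simply $D_f$ by definition consists of positive reals $\le b-a$). So the real content is the reverse inclusion: every $d \in (0, b-a]$ is realized as $|x-y|$ with $f(x) = f(y)$.

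Assume without loss of generality $f(x) > \lambda$ for all $a < x < b$ (the other case is symmetric, or follows by replacing $f$ with $2\lambda - f$). Fix $d \in (0, b-a]$. The idea is to run an intermediate-value argument on the function $h(t) = f(t+d) - f(t)$, defined for $t \in [a, b-d]$, exactly as in the proof of Theorem \ref{thm1} but now with a sliding window of arbitrary width $d$ rather than a commensurable one. Evaluate $h$ at the two endpoints of its domain: $h(a) = f(a+d) - f(a) = f(a+d) - \lambda$ and $h(b-d) = f(b) - f(b-d) = \lambda - f(b-d)$. Since $a < a+d \le b$ and $a \le b-d < b$, and since $f$ is strictly greater than $\lambda$ on the open interval $(a,b)$: if $a + d < b$ then $h(a) = f(a+d) - \lambda > 0$, and if $b - d > a$ then $h(b-d) = \lambda - f(b-d) < 0$. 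In the generic case $0 < d < b-a$ both strict inequalities hold, so $h$ is continuous, positive at one end and negative at the other, and the intermediate value theorem yields $c \in (a, b-d)$ with $h(c) = 0$, i.e.\ $f(c+d) = f(c)$, so $d \in D_f([a,b])$. The boundary case $d = b-a$ is handled separately and trivially: $f(a) = f(b) = \lambda$ gives $b - a \in D_f([a,b])$ directly.

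The only subtlety — and the one point I would be careful about — is the degenerate possibility that one of the endpoint evaluations of $h$ is not strictly signed, which would happen only if $a+d$ or $b-d$ coincides with an endpoint of $[a,b]$; this is precisely the case $d = b-a$ just disposed of, so for $d \in (0, b-a)$ both evaluations are strict and the argument goes through cleanly. Combining the two inclusions gives $D_f([a,b]) = (0, b-a]$.
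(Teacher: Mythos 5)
Your proposal is correct and follows essentially the same route as the paper: assume WLOG $f>\lambda$ on the interior, dispose of $d=b-a$ trivially, and for $d\in(0,b-a)$ apply the intermediate value theorem to $g(t)=f(t+d)-f(t)$ using $g(a)>0$ and $g(b-d)<0$. Your explicit note about the trivial inclusion $D_f([a,b])\subseteq(0,b-a]$ and the care with the degenerate endpoint case are fine additions but do not change the argument.
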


\begin{proof}
We may assume without loss of generality that $f(x) > \lambda$ for all $a<x<b$ since $D_f([a,b]) = D_{-f+2\lambda}([a,b])$. In other words, $D_f([a,b])$ does not change when the graph of $f$ is reflected over the line $y=\lambda.$

It is clear that $b-a\in D_f([a,b])$ since $f(a)=f(b)$, so we will let $d\in(0,b-a)$ and show that $d\in D_f([a,b])$. Define $g(x) = f(x+d)-f(x)$. Note that $g(a)=f(a+d)-f(a) = f(a+d) - \lambda > 0$ because $f(a+d) > \lambda$. Also, $g(b-d)=f(b)-f(b-d) = \lambda -f(b-d) < 0$ because $f(b-d) > \lambda$.

The intermediate value theorem guarantees the existence of a $c\in(a, b-d)$ such that $g(c) = f(c+d)-f(c) = 0$, i.e., $f(c+d)=f(c)$. Therefore $d\in D_f([a, b])$.
\end{proof}

\begin{lemma} \label{2 intervals}
Given any $a_1 < a_2 \leq a_3 < a_4$, define $A=[a_1,a_2]\cup[a_3,a_4]$, and let $f:A\to\R$ be a continuous function such that $f(a_k) = \lambda$ for $1\leq k \leq 4$. Suppose either $f(x) > \lambda$ for all $x\in A^\mathrm{o}$ or $f(x) < \lambda$ for all $x\in A^\mathrm{o}$. If $\max_{[a_1,a_2]}(f) \geq \max_{[a_3, a_4]}(f)$, then $D_f(A) \supseteq [a_3-a_1, a_4-a_1]$.
\end{lemma}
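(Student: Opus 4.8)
The goal is to show $D_f(A) \supseteq [a_3 - a_1, a_4 - a_1]$ where $A = [a_1, a_2] \cup [a_3, a_4]$ and $f$ is, WLOG, strictly above $\lambda$ on $A^{\mathrm o}$ (using $D_f(A) = D_{-f+2\lambda}(A)$ as in Lemma \ref{1 interval}). Fix a target distance $d \in [a_3 - a_1, a_4 - a_1]$; we must produce $x, y \in A$ with $|x - y| = d$ and $f(x) = f(y)$. The idea is to slide a "witness pair" a distance $d$ apart: let $x$ range over $[a_1, a_1 + (a_4 - d)] \cap [a_1, a_2]$ — chosen so that $x \in [a_1, a_2]$ while $x + d \in [a_3, a_4]$ — and consider $g(x) = f(x + d) - f(x)$. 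The constraint $d \ge a_3 - a_1$ ensures $x+d \ge a_3$ when $x$ is near $a_1$, and $d \le a_4 - a_1$ ensures there is some legitimate $x$ (namely $x = a_1$, giving $x + d = a_1 + d \in [a_3, a_4]$).

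The cleanest route is an intermediate value argument on this restricted domain of $x$. At $x = a_1$: $g(a_1) = f(a_1 + d) - f(a_1) = f(a_1 + d) - \lambda \ge 0$, with equality only if $a_1 + d$ is an endpoint $a_3$ or $a_4$ — and in either of those boundary cases we are already done (the pair $(a_1, a_3)$ or $(a_1, a_4)$ works). So assume $g(a_1) > 0$. For the other end, I want a point where $g \le 0$; this is where the hypothesis $\max_{[a_1,a_2]} f \ge \max_{[a_3,a_4]} f$ enters. Let $x^*$ be a point in $[a_3, a_4]$ where $f$ attains its maximum $M_2 = \max_{[a_3,a_4]} f$ over the right interval. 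Then consider $y = x^* - d$: one checks $y$ lands in (or can be arranged to land in) $[a_1, a_2]$, and $g(y) = f(x^*) - f(y) = M_2 - f(y)$. Since $f(y) \le \max_{[a_1,a_2]} f$ is the wrong direction, instead I should pick $x^*$ to be where $f$ attains $M_1 = \max_{[a_1,a_2]} f$ and look at $g(x^*) = f(x^* + d) - f(x^*) = f(x^*+d) - M_1 \le M_2 - M_1 \le 0$, provided $x^* + d \in [a_3, a_4]$. So the real content is: for each $d$ in the target range, the maximizer of $f$ on $[a_1,a_2]$ can be shifted by $d$ into $[a_3,a_4]$ — or if not, one falls into a boundary/degenerate case handled directly — and then IVT between $a_1$ (where $g > 0$) and $x^*$ (where $g \le 0$) along a connected set of valid $x$ gives a zero of $g$.

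The main obstacle I anticipate is bookkeeping the domain on which $g$ is defined and connected: $x$ must satisfy $x \in [a_1, a_2]$ \emph{and} $x + d \in [a_3, a_4]$ simultaneously, i.e.\ $x \in [a_1, a_2] \cap [a_3 - d, a_4 - d]$, and I need this intersection to be a single interval containing both $a_1$ and the chosen maximizer $x^*$ (or to reduce to a boundary case when it is too small). Verifying that $a_1 \ge a_3 - d$ uses $d \ge a_3 - a_1$; verifying that the maximizer $x^*$ of $f$ on $[a_1,a_2]$ satisfies $x^* \le a_4 - d$ may fail, and the fallback is that then $x^* + d > a_4$, so on the valid domain $g$ is evaluated with $x + d$ capped near $a_4$ where $f = \lambda$, again forcing a sign change. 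Handling the edge cases $a_2 = a_3$ (the two intervals abut) and $x^* \in \{a_1, a_2\}$ (maximizer at an endpoint, where $f = \lambda$) cleanly — rather than through a proliferation of sub-cases — will be the part that needs the most care; I would try to phrase the IVT setup so that all of these are subsumed by "$g \ge 0$ at the left end, $g \le 0$ at some reachable point, hence $g = 0$ somewhere in between," with the degenerate equalities reinterpreted as having already found the desired pair.
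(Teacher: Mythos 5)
Your plan is correct and is essentially the paper's own argument: both consider $g(x)=f(x+d)-f(x)$, note $g(a_1)>0$ away from the trivial endpoint cases, obtain a point where $g\leq 0$ either at a maximizer of $f$ on $[a_1,a_2]$ shifted into $[a_3,a_4]$ (the paper's case $d<a_4-a_2$, argued there by contradiction) or at $a_4-d$ where $f(x+d)=\lambda$ (the paper's case $d>a_4-a_2$), and finish with the intermediate value theorem. Your case split on whether the maximizer can be shifted into $[a_3,a_4]$ is just a reindexing of the paper's split on $d$ versus $a_4-a_2$.
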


\begin{proof}
As before, we can assume without loss of generality that $f(x) > \lambda$  for all $x\in A^\mathrm{o}$.

It is clear that $a_3-a_1, a_4-a_1\in D_f(A)$ since $f(a_1)=f(a_3)=f(a_4)$, so we will let $d\in(a_3-a_1, a_4-a_1)$ and show that $d\in D_f(A)$. We will do this in three cases, depending on whether $d$ is greater than, less than, or equal to $a_4-a_2$. Define $g(x) = f(x+d)-f(x)$.\\

\noindent \textit{Case 1: $d > a_4-a_2$}.

In this case, we compute $g(a_1) = f(a_1+d)-f(a_1) = f(a_1+d) -\lambda > 0$ and $g(a_4-d) = f(a_4) - f(a_4-d) = \lambda - f(a_4-d) < 0$. Here, we've used that $a_1+d \in (a_3,a_4)$ and $a_4-d \in (a_1, a_2)$ and $f>\lambda$ on these two open intervals. The intermediate value theorem then guarantees a $c \in (a_1,a_4-d)$ such that $g(c) = f(c+d)-f(c) = 0$. Hence $d\in D_f(A)$.\\

\noindent \textit{Case 2: $d < a_4-a_2$}.

In this case, once again we compute $g(a_1) = f(a_1+d)-f(a_1) = f(a_1+d) - \lambda > 0$. This time, however, we observe that $g(t) \leq 0$ for some $t\in (a_1, a_2)$. Otherwise, we would have $f(t+d) > f(t)$ for all $t\in (a_1, a_2)$, contradicting the assumption $\max_{[a_1,a_2]}(f) \geq \max_{[a_3, a_4]}(f)$.\smallskip

If $g(t) = 0$, we have $f(t+d) = f(t)$. If $g(t) < 0$, then the intermediate value theorem gives a $c \in (t,a_2)$ such that $g(c) = f(c+d)-f(c) = 0$. In either case, $d\in D_f(A)$.\\

\noindent \textit{Case 3: $d = a_4-a_2$}.

This case is trivial as $f(a_2) = \lambda = f(a_4) = f(a_2 + d)$.
\end{proof}

\begin{lemma} \label{n intervals}
Given any $a_1<b_1\leq a_2<b_2 \leq \dots \leq a_n < b_n$, define $A=\cup_{k=1}^n [a_k, b_k]$ and let $f:A\to\R$ be a continuous function such that $f(a_k) = f(b_k) = \lambda$ for $1\leq k \leq n$. Suppose either $f(x) > \lambda$ for all $x\in A^\mathrm{o}$ or $f(x) < \lambda$ for all $x\in A^\mathrm{o}$. Then $\mu(D_f(A)) \geq \mu(A)$.
\end{lemma}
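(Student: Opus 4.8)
The plan is to argue by induction on $n$. As in the earlier lemmas, I first reduce to the case $f(x)>\lambda$ for all $x\in A^{\mathrm{o}}$ by replacing $f$ with $2\lambda-f$, which changes neither $D_f(A)$ nor the hypotheses. The base case $n=1$ is precisely Lemma~\ref{1 interval}: it gives $D_f(A)=(0,b_1-a_1]$, so $\mu(D_f(A))=b_1-a_1=\mu(A)$.

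For the inductive step ($n\ge 2$) the idea is to delete one of the two \emph{extreme} intervals $[a_1,b_1]$ or $[a_n,b_n]$, apply the inductive hypothesis to what remains, and pay for the deleted interval with a single fresh subinterval of $D_f(A)$ supplied by Lemma~\ref{2 intervals}. Which end to delete is dictated by comparing $M_1:=\max_{[a_1,b_1]}f$ and $M_n:=\max_{[a_n,b_n]}f$. If $M_1\ge M_n$, put $A'=\bigcup_{k=1}^{n-1}[a_k,b_k]$; the restriction of $f$ to $A'$ meets the hypotheses with $n-1$ intervals, so by induction $\mu(D_f(A'))\ge\sum_{k=1}^{n-1}(b_k-a_k)$. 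Since $M_1\ge M_n$, Lemma~\ref{2 intervals} applied to $f$ on $[a_1,b_1]\cup[a_n,b_n]$ gives $D_f(A)\supseteq[a_n-a_1,\,b_n-a_1]$, an interval of length $b_n-a_n$. The crucial observation is that this interval lies essentially above $D_f(A')$: every element of $D_f(A')$ is at most $\diam A'=b_{n-1}-a_1$, whereas $a_n-a_1\ge b_{n-1}-a_1$ because $a_n\ge b_{n-1}$, so $D_f(A')$ and $[a_n-a_1,b_n-a_1]$ overlap in at most one point. Both sets lie in $D_f(A)$ by Lemma~\ref{inclusion}, hence $\mu(D_f(A))\ge\mu(D_f(A'))+(b_n-a_n)\ge\sum_{k=1}^{n}(b_k-a_k)=\mu(A)$.

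If instead $M_n\ge M_1$, I would run the mirror image of this argument. First one checks a reflected form of Lemma~\ref{2 intervals}: applying that lemma to $x\mapsto f(a_1+b_n-x)$ shows that when $\max_{[a_n,b_n]}f\ge\max_{[a_1,b_1]}f$ one has $D_f([a_1,b_1]\cup[a_n,b_n])\supseteq[b_n-b_1,\,b_n-a_1]$, an interval of length $b_1-a_1$. Then, deleting $[a_1,b_1]$ and setting $A''=\bigcup_{k=2}^{n}[a_k,b_k]$, the inductive hypothesis gives $\mu(D_f(A''))\ge\sum_{k=2}^{n}(b_k-a_k)$, while $D_f(A'')\subseteq(0,b_n-a_2]$ and $b_n-b_1\ge b_n-a_2$ since $b_1\le a_2$; adding the two disjoint-up-to-a-point contributions yields $\mu(D_f(A))\ge\mu(D_f(A''))+(b_1-a_1)\ge\mu(A)$ once more. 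In either case $\mu(D_f(A))\ge\mu(A)$.

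The step I expect to be the real obstacle is deciding \emph{which} interval to remove. The natural-looking choice — delete the interval on which $f$ attains the smallest maximum — can fail, because that interval may be interior, and then all of the distances it contributes to $D_f(A)$ are at most $\diam A$, leaving no room to insert new measure without double-counting against $D_f$ of the rest. Forcing the deleted interval to be extreme, and using the comparison of $M_1$ with $M_n$ to pick both the end and the orientation of Lemma~\ref{2 intervals}, is exactly what makes the new subinterval land above the diameter of the remaining set; after that the bookkeeping with Lebesgue measure and the verification of the reflected version of Lemma~\ref{2 intervals} are routine.
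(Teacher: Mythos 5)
Your proposal is correct and is essentially the paper's own argument: induction on $n$, applying Lemma~\ref{2 intervals} to the two extreme intervals to produce a fresh subinterval of $D_f(A)$ lying above $\diam$ of the remaining intervals, then adding measures using the induction hypothesis and Lemma~\ref{inclusion}. The only cosmetic difference is that you treat the case $\max_{[a_n,b_n]}f\ge\max_{[a_1,b_1]}f$ explicitly via a reflected form of Lemma~\ref{2 intervals}, whereas the paper disposes of it with a ``without loss of generality'' appeal to the invariance of $D_f$ under horizontal reflection.
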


\begin{proof}
We will use proof by induction on $n$, the number of intervals.\\

\noindent \textit{Base case (n=1):} \\

The base case is covered by Lemma \ref{1 interval}, which gives us $D_f([a_1, b_1]) = (0, b_1 - a_1]$. Therefore, $\mu(D_f([a_1, b_1])) = \mu([a_1, b_1]) = b_1 - a_1$.\\

\noindent \textit{Induction Step:} \\

Our goal is to prove that $\mu(D_f(\cup_{k=1}^{n+1}[a_k, b_k])) \geq \mu(\cup_{k=1}^{n+1}[a_k, b_k])$. Assume, without loss of generality, that $\max_{[a_1,b_1]}(f) \geq \max_{[a_{n+1}, b_{n+1}]}(f)$. We do not lose generality because $D_f$ is invariant with respect to horizontal reflections, i.e., $D_{f(x)} = D_{f(-x)}$. Then, by Lemma \ref{2 intervals}, $D_f([a_1,b_1] \cup [a_{n+1},b_{n+1}]) \supseteq (a_{n+1}-a_1, b_{n+1}-a_1)$. Combining this fact with Lemma \ref{inclusion} gives
\begin{align*}
D_f(\cup_{k=1}^{n+1}[a_k, b_k]) &\supseteq D_f(\cup_{k=1}^{n}[a_k, b_k]) \cup D_f([a_1, b_2] \cup [a_{n+1}, b_{n+1}])\\
&\supseteq D_f(\cup_{k=1}^{n}[a_k, b_k]) \cup (a_{n+1}-a_1, b_{n+1}-a_1).
\end{align*}

Next, observe that $(a_{n+1}-a_1, b_{n+1}-a_1)$ and $D_f(\cup_{k=1}^{n}[a_k, b_k])$ are disjoint. Indeed, if $d\in D_f(\cup_{k=1}^{n}[a_k, b_k])$, then $d \leq b_n - a_1 \leq a_{n+1}-a_1$.
\noindent Computing the length of both sides and applying the induction hypothesis, we get
\begin{align*}
\mu(D_f(\cup_{k=1}^{n+1}[a_k, b_k])) &\geq \mu(D_f(\cup_{k=1}^{n}[a_k, b_k]) \cup (a_{n+1}-a_1, b_{n+1}-a_1))\\
&= \mu(D_f(\cup_{k=1}^{n}[a_k, b_k])) + \mu((a_{n+1}-a_1, b_{n+1}-a_1))\\
&\geq \mu(\cup_{k=1}^{n}[a_k, b_k]) + \mu((a_{n+1}-a_1, b_{n+1}-a_1))\\
&= \mu(\cup_{k=1}^{n+1}[a_k, b_k])
\end{align*} 
\end{proof}

\begin{lemma} \label{count intervals}
Let $\set{I_n}$ be a countable collection of closed intervals and define $A=\cup_{n=1}^\infty I_n$. Assume that $A$ is bounded and $\set{I_n}$ have disjoint interiors. Let $f$ be a continuous function on $A$ such that $f(x) = \lambda$ on the endpoints of each $I_n$ and either $f(x) > \lambda$ for all $x\in A^\mathrm{o}$ or $f(x) < \lambda$ for all $x\in A^\mathrm{o}$. Then $\mu(D_f(A)) \geq \mu(A)$.
\end{lemma}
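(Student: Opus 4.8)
The plan is to reduce to the finite case already settled in Lemma \ref{n intervals} and then pass to the limit using continuity of Lebesgue measure from below. First I would dispose of degenerate intervals: if some $I_n$ is a single point, it has empty interior and contributes nothing to $\mu(A)$, while deleting it can only shrink $D_f(A)$ by Lemma \ref{inclusion}; hence it suffices to prove the inequality after discarding all such intervals, and we may assume every $I_n$ is non-degenerate.

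For each $N$, set $A_N = \cup_{n=1}^N I_n$. Since the $I_n$ are non-degenerate with pairwise disjoint interiors, two of them can meet in at most a common endpoint, so the finitely many intervals $I_1,\dots,I_N$ have distinct left endpoints and can be relabeled in increasing order as $[a_1,b_1],\dots,[a_N,b_N]$ with $a_1<b_1\leq a_2<b_2\leq\dots\leq a_N<b_N$ — precisely the hypothesis of Lemma \ref{n intervals}. Moreover $f(a_k)=f(b_k)=\lambda$ for each $k$, and $A_N^\mathrm{o}\subseteq A^\mathrm{o}$, so the sign hypothesis on $f$ is inherited by $A_N$. Lemma \ref{n intervals} then gives $\mu(D_f(A_N)) \geq \mu(A_N)$.

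Next I would verify that $D_f(A) = \cup_{N=1}^\infty D_f(A_N)$, with the union increasing. The inclusions $D_f(A_N)\subseteq D_f(A_{N+1})\subseteq D_f(A)$ are immediate from $A_N\subseteq A_{N+1}\subseteq A$ together with Lemma \ref{inclusion}. For the reverse inclusion, if $d\in D_f(A)$ choose $x,y\in A$ with $\abs{x-y}=d$ and $f(x)=f(y)$; then $x\in I_n$ and $y\in I_m$ for some $n,m$, hence $x,y\in A_{\max(n,m)}$ and so $d\in D_f(A_{\max(n,m)})$. Each $A_N$ is compact, so $D_f(A_N)$ is the image of the $F_\sigma$ set $\set{(x,y)\in A_N\times A_N : f(x)=f(y),\ x\neq y}$ under $(x,y)\mapsto\abs{x-y}$, hence $F_\sigma$ and in particular Lebesgue measurable; therefore so is $D_f(A)$.

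Finally, $\set{A_N}$ increases to $A$ (so $\mu(A_N)\to\mu(A)$, using that $A$ is bounded) and $\set{D_f(A_N)}$ increases to $D_f(A)$, so continuity of Lebesgue measure from below yields
\[
\mu(D_f(A)) = \lim_{N\to\infty}\mu(D_f(A_N)) \geq \lim_{N\to\infty}\mu(A_N) = \mu(A).
\]
The argument is largely bookkeeping; the only points needing a moment's care are checking that a finite subfamily of intervals with disjoint interiors genuinely satisfies the ordering hypothesis of Lemma \ref{n intervals} (handled by discarding degenerate intervals and sorting) and the identity $D_f(A)=\cup_N D_f(A_N)$, whose crux is simply that any pair of points realizing a given distance lies in only finitely many of the $I_n$.
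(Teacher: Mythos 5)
Your proof is correct and follows essentially the same route as the paper: truncate to the finite unions $A_N=\cup_{n=1}^N I_n$, apply Lemma \ref{n intervals}, and pass to the limit. The only real difference is the limiting step --- the paper chooses $N$ with tail measure $\mu\left(\cup_{k=N}^\infty I_k\right)<\epsilon$ and uses only monotonicity of (outer) measure, whereas you establish $D_f(A)=\cup_N D_f(A_N)$ and invoke continuity from below, which is why you also need (and correctly supply) measurability of the sets $D_f(A_N)$; your explicit treatment of degenerate intervals and of the ordering hypothesis of Lemma \ref{n intervals} is careful bookkeeping that the paper leaves implicit.
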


\begin{proof}
Fix $\epsilon>0$. Since $A$ is bounded and $\set{I_n}$ have disjoint interiors, we know that $\lim\limits_{n\to\infty}\mu\left(\cup_{k=n}^\infty I_k\right) = 0$. Thus there exists some $N\in\N$ such that $\mu\left(\cup_{k=N}^\infty I_k\right) < \epsilon$. Applying Lemma \ref{n intervals} and Lemma \ref{inclusion} yields
\begin{align*}
\mu(D_f(A)) &\geq \mu\left(D_f\left(\cup_{k=1}^N I_k\right)\right)\\
&\geq \mu\left(\cup_{k=1}^N I_k\right)\\
&= \mu(A) - \mu\left(\cup_{k=N}^\infty I_k\right)\\
&> \mu(A) - \epsilon
\end{align*} 
Therefore, $\mu(D_f(A)) \geq \mu(A)$ because $\epsilon$ was arbitrary.
\end{proof}

With access to these lemmas, we are now prepared to prove that $D_f([a,b])$ must contain at least a third of the distances in $(0,b-a]$.

\begin{theorem} \label{main thm}
If $f \in C_\lambda([a,b])$ then $\mu(D_f) \geq \frac{b-a}{3}$.
\end{theorem}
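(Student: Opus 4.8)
The plan is to reduce to $[a,b]=[0,1]$ as in the earlier proofs, and then to exhibit inside $D_f$ three subsets whose measures sum to at least $1$; a pigeonhole argument then forces one of them, hence $D_f$ itself, to have measure at least $1/3$.

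First I would set $Z=f^{-1}(\lambda)$. Being a closed subset of $[0,1]$ that contains the two endpoints, $Z$ is bounded and attains a maximum, so Lemma \ref{constant} gives $\mu(D_f(Z))\ge\mu(Z)$. The complement $[0,1]\setminus Z$ is relatively open, hence an at most countable disjoint union of open intervals $(a_k,b_k)$, and on each such interval the continuous function $f$, which never equals $\lambda$ there, is either everywhere above $\lambda$ or everywhere below it. Let $A_+$ be the union of the closed intervals $[a_k,b_k]$ over the components on which $f>\lambda$, and $A_-$ the corresponding union over the components on which $f<\lambda$. Each of $A_+$ and $A_-$ is a countable union of closed intervals with pairwise disjoint interiors whose endpoints all lie in $Z$, and $f\ge\lambda$ throughout $A_+$ while $f\le\lambda$ throughout $A_-$. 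Then Lemma \ref{count intervals} (or Lemma \ref{n intervals} if there are only finitely many components) yields $\mu(D_f(A_+))\ge\mu(A_+)$ and $\mu(D_f(A_-))\ge\mu(A_-)$.

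Since $Z$ together with the open intervals $(a_k,b_k)$ partitions $[0,1]$, and passing from those intervals to their closures adds only countably many points, $\mu(Z)+\mu(A_+)+\mu(A_-)=1$. By Lemma \ref{inclusion}, each of $D_f(Z)$, $D_f(A_+)$, and $D_f(A_-)$ is contained in $D_f=D_f([0,1])$, so
\[
3\,\mu(D_f)\;\ge\;\mu(D_f(Z))+\mu(D_f(A_+))+\mu(D_f(A_-))\;\ge\;\mu(Z)+\mu(A_+)+\mu(A_-)\;=\;1,
\]
and therefore $\mu(D_f)\ge 1/3$. Rescaling $[0,1]$ back to $[a,b]$ multiplies all lengths by $b-a$, giving $\mu(D_f)\ge\frac{b-a}{3}$.

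The step I expect to demand the most care is the application of Lemma \ref{count intervals} to $A_+$ and $A_-$: two components on which $f$ has the same sign can abut at a common endpoint $c\in Z$, making $c$ an interior point of (say) $A_+$ at which $f=\lambda$ rather than $f>\lambda$, so the hypothesis ``$f>\lambda$ on the interior'' is not met verbatim. I would handle this by verifying that Lemmas \ref{1 interval}--\ref{count intervals} all remain valid with ``$\ge$'' in place of ``$>$'' (and ``$\le$'' in place of ``$<$''); the intermediate value arguments need only cosmetic changes, because wherever strictness was used to force the auxiliary function $g(x)=f(x+d)-f(x)$ to be nonzero at some point, the borderline case $g=0$ there instead produces a pair of points at distance $d$ with equal $f$-values directly. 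Alternatively one can first merge each maximal chain of abutting same-sign components into a single closed interval. Either way, the measure count above then goes through unchanged.
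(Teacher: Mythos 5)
Your proposal is correct and follows essentially the same route as the paper: decompose $[a,b]$ into the set where $f=\lambda$ and the closures of the sets where $f>\lambda$ and $f<\lambda$, apply Lemmas \ref{constant}, \ref{count intervals}, and \ref{inclusion}, and finish by pigeonhole (the paper takes the maximum of the three measures rather than their sum, which is the same argument). The abutting-components subtlety you flag is real --- the paper applies Lemma \ref{count intervals} to $\overline{A_>}$ without comment even though two components of $A_>$ can share an endpoint lying in the interior of the union where $f=\lambda$ --- and your fix of rechecking the lemmas with weak inequalities (note the merging alternative alone does not remove such interior points, so the weak-inequality version is the one to use) makes your write-up, if anything, more careful than the paper's.
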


\begin{proof}
Let $A_>$, $A_<$, and $A_=$ be the subsets of $[a,b]$ on which $f$ is greater than, less than, and equal to $\lambda$ respectively.

$A_>$ and $A_<$ are the preimages of open sets under a continuous function and are thus open. Therefore, each is a countable union of open intervals. Applying Lemma \ref{count intervals} to the closure of each tells us that $\mu(D_f(\overline{A_>})) \geq \mu(\overline{A_>}) = \mu(A_>)$ and $\mu(D_f(\overline{A_<})) \geq \mu(\overline{A_<}) = \mu(A_<)$.\footnote{Dropping the closure doesn't change the length because the union of countably many intervals has a countable boundary.} Applying Lemma \ref{constant} to $A_=$ gives $\mu(D_f(A_=)) \geq \mu(A_=)$. Combining these three inequalities with Lemma \ref{inclusion}, we have
\begin{align*}
\mu(D_f([a,b])) &\geq \max\left(\mu(D_f(\overline{A_>})), \mu(D_f(\overline{A_<})), \mu(D_f(A_=))\right)\\
&\geq \max\left(\mu(A_>), \mu(A_<), \mu(A_=)\right)\\
&\geq \frac{b-a}{3}
\end{align*}
where the last line follows from $\mu(A_>) + \mu(A_<) + \mu(A_=) = b-a$.
\end{proof}

\section{Future Work}

\subsection{Is $\frac{b-a}{3}$ a  minimum?} \label{alternativeDf}

Theorem \ref{main thm} establishes a lower bound on $D_f$ for functions in $C_\lambda([a,b])$. The key was to restrict our attention to $A_> = \set{x : f(x)>\lambda}$ because if $f(x)=f(y)$, then either both $x$ and $y$ are in $A_>$ or neither are. The same holds for $A_<$ and $A_=$. In other words, points in $D_f$ cannot arise due to "interactions" among $A_>$, $A_<$, and $A_=$. With this in mind, the bound in Theorem \ref{main thm} seems tight: simply define a function which is positive on the first third of $[a,b]$, negative on the second third, and zero on the last third. Then each of $A_>$, $A_<$, and $A_=$ should contribute $(0,\frac{b-a}{3}]$ to $D_f$. For example, let
\[
f(x) = \begin{cases} \sin x &\mbox{if } 0 \leq x \leq 2\pi \\
0 &\mbox{if } 2\pi \leq x \leq 3\pi \end{cases}.
\]
The reason this strategy doesn't work is $A_=$. Indeed, $D_f(A_>)=D_f(A_<)=(0,\pi]$. However, $A_= = \set{0,\pi}\cup [2\pi, 3\pi]$ and $D_f(A_=) = (0,3\pi]$. This makes $D_f$ as large as possible due to interactions between the points $0$ and $\pi$ and the interval $[2\pi, 3\pi].$

The trouble with the previous example is the presence of isolated points 0 and $\pi$ in $A_=$. The former is unavoidable, but we can eliminate the latter by making $f$ zero \textit{between} the intervals on which it is positive and negative. Let 
\[
f(x) = \begin{cases} \sin x &\mbox{if } 0 \leq x \leq \pi \\
0 &\mbox{if } \pi \leq x \leq 2\pi \\
-\sin x &\mbox{if } 2\pi \leq x \leq 3\pi \end{cases}.
\]
Now $A_= = \set{0,3\pi}\cup [\pi, 2\pi]$ and $D_f(A_=) = (0,2\pi]$, but $D_f$ is still strictly greater than the bound established in Theorem \ref{main thm}.

Had we defined $D_f$ slightly differently to ignore the endpoints of the domain of $f$, the previous example would prove Theorem \ref{main thm} is sharp. More precisely, if we instead define $D_f = \set{d>0: \abs{x-y}=d \text{ and } f(x)=f(y) \text{ for some } a<x<y<b}$, then $D_f = (0,\pi]$ in the previous example.

However, if we stick to our original definition, is there an $f\in C_\lambda([a,b])$ with $\mu(D_f)=\frac{b-a}{3}$. If not, what is the infimum of $D_f$ over all such $f$?\\

\subsection{Generalizations} 

What does $D_f(X)$ look like when $X$ is not a closed interval? We could broaden the class of functions we look at by defining 
\[
C_\lambda(X) = \set{f:\overline{X}\to \R \;|\; f \text{ is continuous and } f(x) = \lambda \text{ for all } x\in\partial X}.
\]
What is $\bigcap D_f$ over all such $f$ and what is the infimum of $\mu(D_f)$?

We could also explore functions with an $n$-dimesional domain and/or $m$-dimesional codomain.

What does $D_f(X)$ look like when $X$ is $n$-dimesional? The more general definition of $C_\lambda(X)$ proposed above works just fine in this case. For simplicity, we might want to start with cubes or spheres, and slowly relax the constraints on $X$. Additionally, as in Section \ref{alternativeDf} we should amend the definition $D_f$ to ignore the boundary of $X$. Otherwise $D_f = (0, \diam(X)]$ always (unless $X$ is disconnected).

What does $D_f([a,b])$ look like when the codomain of $f$ is $m$-dimesional? If $m>1$ the minimum $D_f([a,b])$ becomes $\set{b-a}$. Consider, for example, $f:[0,2\pi] \to \R^2$ defined by $f:x \mapsto (\cos x, \sin x)$. The only pair of points in $[0,2\pi]$ which get mapped to the same output are $0$ and $2\pi$, so $D_f = \set{2\pi}$.

To construct an interesting generalization, we must then restrict our attention to functions mapping a closed interval to some subset $A \subset \R^m$. If $A$ contains any "loops," the minimum $D_f([a,b])$ becomes $\set{b-a}$, so $A$ should be a one-dimensional "loop-free" set.

Lastly, if the previous questions are settled, perhaps we could define
\[
C_\lambda(X,Y) = \set{f:\overline{X}\to Y \;|\; f \text{ is continuous and } f(x) = \lambda \text{ for all } x\in\partial X}
\]
and classify $D_f$ in terms of $X\subset \R^n$ and $Y\subset \R^m$.

\end{document}